\documentclass[12pt]{amsart}
\setlength{\textheight}{23cm}
\setlength{\textwidth}{16cm}
\setlength{\topmargin}{-0.8cm}
\setlength{\parskip}{0.3\baselineskip}
\hoffset=-1.4cm
\usepackage{amssymb, amsmath, amsfonts}
\usepackage[raiselinks=false,colorlinks=true,citecolor=blue,urlcolor=blue,linkcolor=blue,bookmarksopen=true,pdftex]{hyperref}
\newtheorem{theorem}{Theorem}[section]
\newtheorem{proposition}[theorem]{Proposition}
\newtheorem{lemma}[theorem]{Lemma}
\newtheorem{remark}[theorem]{Remark}

\newcommand{\bz}{\mathbb{Z}}
\newcommand{\bq}{\mathbb{Q}}
\newcommand{\br}{\mathbb{R}}
\newcommand{\bc}{\mathbb{C}}

\newcommand{\lr}{\longrightarrow}

\newcommand{\wt}{\widetilde}

\newcommand{\tr}{\textrm{tr}}

\newcommand{\gl}{\textrm{GL}}
\newcommand{\SL}{\textrm{SL}}

\begin{document}
\baselineskip=15.5pt
\title[Twisted conjugacy classes]{Twisted conjugacy classes in lattices in semisimple Lie groups}  
\author{T. Mubeena}
\author{P. Sankaran}
\address{The Institute of Mathematical Sciences, CIT
Campus, Taramani, Chennai 600113, India.}
\email{mubeena@imsc.res.in}
\email{sankaran@imsc.res.in}

\subjclass{20E45, 22E40, 20E36\\
Key words and phrases: Twisted conjugacy classes,  lattices in semisimple Lie groups, Groups with $R_\infty$ property}

\date{}

\begin{abstract}
 Given a group automorphism $\phi:\Gamma\lr \Gamma$,  one has 
an action of $\Gamma$ on itself by $\phi$-twisted conjugacy, namely, $g.x=gx\phi(g^{-1})$. 
The orbits of this action are called $\phi$-conjugacy classes.  One says 
that $\Gamma$ has the $R_\infty$-property if there are infinitely many $\phi$-conjugacy 
classes for every automorphism $\phi$ of $\Gamma$. In this paper we show that any irreducible lattice in 
a connected non-compact semi simple Lie group having finite centre and rank at least $2$ has the $R_\infty$-property. 
\end{abstract}
\maketitle
%%%%%%%%%%%%%%%%%%%%%%%%%%%%
\section{Introduction}
Let $\Gamma$ be a finitely generated infinite group and let $\phi:\Gamma\lr \Gamma$ be an endomorphism.  
One has an equivalence relation $\sim_\phi$ on $\Gamma$ defined as $x\sim_\phi y$ if there exists a $g\in \Gamma$ such that $y=gx\phi(g)^{-1}$.  The equivalence 
classes are called the $\phi$-conjugacy classes. 
Note that when $\phi$ is the identity, $\phi$-conjugacy classes are the usual conjugacy classes. 
The $\phi$-conjugacy classes are nothing but the orbits of the action of $\Gamma$ on itself defined as $g.x=gx\phi(g^{-1})$.   The $\phi$-conjugacy class containing $x\in \Gamma$ is denoted $[x]_\phi$ or simply $[x]$ when $\phi$ is clear from the context.  The set of all $\phi$-twisted conjugacy classes is denoted by $\mathcal{R}(\phi)$. The cardinality $R(\phi)$ of $\mathcal{R}(\phi)$ is called the {\it Reidemeister number} of 
$\phi$.  One says that $\Gamma$ has the $R_\infty$-property for automorphisms (more briefly, $R_\infty$-property) if there are infinitely many $\phi$-twisted conjugacy classes for every automorphism $\phi$ of $\Gamma$.  If $\Gamma$ has the $R_\infty$-property, we shall call $\Gamma$ an $R_\infty$-group.  

The notion of twisted conjugacy originated in 
Nielson-Reidemeister fixed point theory and also arises in other areas of mathematics such as representation theory, number theory and algebraic geometry. See \cite{felshtyn2} and the references therein. 
The problem of determining which classes of groups have $R_\infty$-property is an area of active research initiated by Fel'shtyn and Hill \cite{fh}.  

Let $G$ be a non-compact semi simple Lie group with  finite centre.  Recall that a discrete subgroup 
$\Gamma\subset G$ is called a {\it lattice} if $G/\Gamma$ has a finite $G$-invariant measure. One says that $\Gamma$ is {\it cocompact} if $G/\Gamma$ is compact; otherwise $\Gamma$ is non-cocompact.  If, for any  non-compact closed normal subgroup $H\subset G$, the image of $\Gamma$ under the quotient map $G\lr G/H$ is 
dense, one says that $\Gamma$ is {irreducible.} 
If $G$ has no compact factors, 
$\Gamma $ is irreducible if and only if  
for any two closed normal subgroups $H_1, H_2$ of $G$ such that $G=H_1.H_2$ and lattices $\Gamma_i\subset H_i$, the 
group $\Gamma_1.\Gamma_2$ is not commensurable 
with $\Gamma$. 
In particular, any lattice 
in $G$ is irreducible if $G$ is simple.

The main result of this paper is the following:
%%%%%%%%%%%%%%%%%%%%%%%
\begin{theorem} \label{main}
Let $\Gamma$ be any irreducible lattice in a connected semi simple non-compact Lie group $G$ with finite centre. If the real rank of $G$ is at least $2$, then 
$\Gamma$ has the $R_\infty$ property.
\end{theorem}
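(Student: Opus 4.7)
The plan is to combine Margulis' rigidity theorems with a norm-type invariant of twisted conjugacy. Since $G$ has real rank at least $2$ and $\Gamma$ is irreducible, Margulis' arithmeticity and superrigidity theorems apply. Consequently $\Gamma$ is, up to commensurability, the group of integer points of a semisimple algebraic group, and every automorphism of $\Gamma$ is induced, on a subgroup of finite index, by an algebraic automorphism of the ambient group. Because the algebraic outer automorphism group of a connected semisimple algebraic group is finite --- governed by the Dynkin-diagram symmetries and Galois actions on the field of definition --- the group $\out(\Gamma)$ is finite. Therefore some positive power of the given $\phi$ is inner: there exist $n\geq 1$ and $\gamma_0\in\Gamma$ with $\phi^n(x)=\gamma_0 x\gamma_0^{-1}$ for all $x\in\Gamma$.

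With this reduction in place, I would introduce the norm map
$$N_\phi(x) := x\,\phi(x)\,\phi^2(x)\cdots\phi^{n-1}(x) \in \Gamma.$$
A straightforward telescoping calculation shows that if $y = gx\phi(g)^{-1}$, then
$$N_\phi(y)\gamma_0 = g\,\bigl(N_\phi(x)\gamma_0\bigr)\,g^{-1},$$
so the ordinary $\Gamma$-conjugacy class of $N_\phi(x)\gamma_0$ depends only on the $\phi$-twisted class $[x]_\phi$. This produces a well-defined map from $\mathcal{R}(\phi)$ to the set of ordinary conjugacy classes of $\Gamma$, and it suffices to prove that this map has infinite image.

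The heart of the matter, and the main obstacle, is to show that this image is infinite. For this I would fix a faithful realization $\rho\colon\Gamma\hookrightarrow\gl_m(\bz)$ supplied by the arithmetic structure, and choose a $\phi$-stable maximal $\bq$-torus $T$ of the ambient algebraic group --- available because $\phi$ acts algebraically and permutes conjugacy classes of maximal tori up to Weyl equivalence. On $A := T\cap\Gamma$, which is a finitely generated abelian group of positive rank thanks to the higher-rank hypothesis, the restriction of $N_\phi$ is the algebraic endomorphism $1+\phi+\phi^2+\cdots+\phi^{n-1}$ of $T$. A character-lattice computation then shows that $x\mapsto\tr\rho(N_\phi(x)\gamma_0)$ is a non-constant integer-valued function on $A$ provided the operator $1+\phi+\cdots+\phi^{n-1}$ does not annihilate the character lattice of $T$. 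If some specific $T$ happens to be annihilated, one replaces $T$ by another $\phi$-stable torus, or twists $\phi$ by a carefully chosen inner automorphism; the higher-rank assumption provides enough freedom for this. Since the trace is a conjugacy invariant, taking infinitely many integer values forces infinitely many conjugacy classes in the image, completing the argument. A minor subtlety that must be addressed at the end is the finite centre $Z(G)\cap\Gamma$, which can be absorbed either by passing to the quotient $\Gamma/(Z(G)\cap\Gamma)$ or by small adjustments to $\gamma_0$ and $N_\phi$.
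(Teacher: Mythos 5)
Your first two steps are sound: finiteness of $\out(\Gamma)$ (via strong rigidity) and the telescoping identity $N_\phi(y)\gamma_0=g\,(N_\phi(x)\gamma_0)\,g^{-1}$ are correct, so twisted classes do map to ordinary conjugacy classes of $N_\phi(x)\gamma_0$. But the step you yourself call the heart of the matter is where the proof actually lives, and it is not established. Two concrete problems. First, the claim that $A=T\cap\Gamma$ has positive rank for a ($\phi$-stable) maximal $\bq$-torus is false as stated: the $\bq$-split diagonal torus of $\SL(n,\bc)$ meets $\SL(n,\bz)$ in a finite group of diagonal $\pm1$ matrices. To get an infinite intersection you need (partially) $\bq$-anisotropic tori, and arranging such a torus to be $\phi$-stable is a genuine difficulty, not a routine consequence of "$\phi$ permutes conjugacy classes of maximal tori." Second, the degenerate case you wave away is precisely the hard one: take $\Gamma=\SL(3,\bz)$ and $\phi(g)={}^tg^{-1}$, so $n=2$, $\gamma_0=e$. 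On every $\phi$-stable torus consisting of symmetric matrices (in particular every $\br$-split one), $\phi$ acts by inversion, so $1+\phi$ annihilates the character lattice and your trace function is constant. Twisting by an inner automorphism does not visibly repair this (inversion-type behaviour is an outer phenomenon), and "replace $T$ by another $\phi$-stable torus" again runs into the existence problem above. "The higher-rank assumption provides enough freedom" is an assertion, not an argument; as it stands the proof breaks exactly on automorphisms like the contragredient, which are among the main cases the theorem must cover.

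There are also two smaller gaps. Passing to a finite-index subgroup on which $\phi$ is induced by an algebraic automorphism is not harmless: the $R_\infty$ property does not transfer from a finite-index subgroup to the ambient group without a characteristic-subgroup argument (in fact, for centreless $G$ without compact factors, strong rigidity extends $\phi$ from all of $\Gamma$, so you should avoid the finite-index detour altogether). And compact factors of $G$ are never addressed; handling them requires Margulis' normal subgroup theorem to show $M\cap\Gamma$ is finite, central and characteristic before quotienting. For comparison, the paper's proof sidesteps all torus constructions and case analysis: it extends $\phi$ to $Aut(G)\subset Aut(\frak{g}\otimes_\br\bc)$, observes that $x\mapsto\tr(x\phi)$ is a twisted-conjugacy invariant, uses Borel density to conclude that finitely many classes would force $\tau_{h\phi}$ to be constant on the identity component for every $h$, and then contradicts this by moving $h\phi$ into a maximal compact subgroup (Mostow) and evaluating the trace along a one-parameter subgroup commuting with it. If you want to salvage your route, the non-vanishing step needs a real argument — for instance an analysis showing that for every outer class one can find a suitable torus or a different conjugacy invariant — and that is essentially a new proof, not a detail.
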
 

When $G$ has real rank $1$, the above result is 
well-known.  Indeed, assume that $G$ has real rank $1$. When the lattice  
$\Gamma$ is cocompact, it is hyperbolic. When 
$\Gamma$ is not cocompact, it is relatively hyperbolic.  
It has been shown by Levitt and Lustig \cite{ll} that 
any torsion free non-elementary hyperbolic group has the $R_\infty$-property. Fel'shtyn (\cite{felshtyn1},\cite{felshtyn2}) established the $R_\infty$ property 
for arbitrary non-elementary hyperbolic groups as well as 
non-elementary relatively hyperbolic groups. 

When $\Gamma$ is a principal congruence subgroup of $\SL(n,\bz), $ 
the above theorem was established in \cite{ms}.  When $\Gamma=Sp(2n,\bz)$, the result was first 
proved by Fel'shtyn and Gon\c{c}alves \cite{fg}; see also \cite{ms}. 
 
Our proof of the above theorem involves only elementary  arguments, using some well-known but deep results concerning irreducible lattices in semi simple Lie groups.
The main theorem is first established when $G$ has no compact factors and has trivial centre.  In this case, the proof uses the Zariski density property of $\Gamma$ due to Borel as well as the strong rigidity theorem. When 
$G$ has non-trivial compact factors, we need to 
use Margulis' normal subgroup theorem to 
reduce to the case when $G$ has trivial centre and 
no compact factors.   

In \S2 we shall recall the results on lattices in 
semi simple Lie groups needed in the proof of Theorem \ref{main}, given in \S3. 

%%%%%%%%%%%%%%%%%%%%%%%
\section{Lattices in semi simple Lie groups}

We recall below the definition of an arithmetic lattice in a semi simple Lie group and some deep results concerning them relevant for our purposes.    

Let ${\bf G}\subset \gl(n,\bc)$ be an algebraic group, that is, ${\bf G}$ is a subgroup of $\gl(n,\bc)$ such that $\mathbf{G}$ is the zero locus of a collection of (finitely many) polynomial equations $f_m(X_{ij})=0$ in the $n^2$ matrix entries $X_{i,j}, 1\leq i,j\leq n$.
One says that ${\bf G}$ is defined over a subfield $k\subset \bc$ if the $f_m$ can be chosen to 
have coefficients in $k$; in this case $G_k:={\bf G}\cap \gl(n,k)$ is the $k$-points of ${\bf G}$.  
If $R$ is a subring of $k$, then $G_R:={\bf G}\cap \gl(n,R)$.  
A theorem of Borel and Harish-Chandra asserts that if ${\bf G}$ is a connected semi simple algebraic group defined over $\bq$ then $G_\bz$ is a lattice in $G_\br$.  
We say that a lattice $\Gamma\subset G$ is {\it arithmetic} if $\mathbf{G}$ is defined over $\bq$ and if $\Gamma$ is commensurable with $G_\bz$.  

If $N\subset G$ is a compact normal subgroup of a connected Lie group $G$ with finite centre and $\Gamma$ a discrete subgroup of  $G$, 
then $\Gamma$ is a lattice in $G$ if and only if the image of $\Gamma$ under the quotient map 
$G\lr G/N$ is a lattice.

Let $G$ be a connected semi simple Lie group and 
$K\subset G$ a maximal compact subgroup. The 
{\it real rank} of $G$ is 
the largest integer $m$ such that the Euclidean space $\br^m$ can be imbedded as a totally geodesic submanifold of the symmetric space $G/K.$  Equivalently, the real rank of $G$ is the dimension of the 
largest abelian subalgebra contained in $\frak{p}$ 
where $\frak{g}=\frak{k}\oplus \frak{p}$ is the Cartan 
decomposition.  Here $\frak{g}=Lie(G), \frak{k}=Lie(K)$.  

The following well-known results will be needed in the proof of our main theorem. 

\begin{theorem}{\em (Borel density theorem)}
Let $\Gamma\subset G_\br$ be any lattice in a connected 
semi simple algebraic group $\mathbf{G}$ defined over $\bq$.  If $G_\br$ has no compact factors,  
then $\Gamma$ is Zariski dense in $\mathbf{G}$.   \hfill $\Box$
\end{theorem}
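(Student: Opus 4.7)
The plan is to show that the Zariski closure $\mathbf{H}$ of $\Gamma$ in $\mathbf{G}$ equals $\mathbf{G}$. Suppose, for a contradiction, that $\mathbf{H}\subsetneq \mathbf{G}$. By Chevalley's theorem on algebraic subgroups, $\mathbf{H}$ can be exhibited as the stabilizer of a line in some rational representation: there exists a representation $\rho:\mathbf{G}\lr\gl(V)$ and a nonzero vector $v\in V$ such that $\mathbf{H}=\{g\in\mathbf{G}\mid \rho(g)v\in\bc\cdot v\}$. Equivalently, $\mathbf{H}$ is the $\mathbf{G}$-stabilizer of the point $[v]\in\bp(V)$, and the $\mathbf{G}$-orbit of $[v]$ is identified with $\mathbf{G}/\mathbf{H}$.

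Next, I would exploit the lattice property of $\Gamma$ to produce a $G_\br$-invariant Borel probability measure on $\bp(V_\br)$ that is concentrated on the real orbit $G_\br\cdot[v]$. Since $\Gamma\subset \mathbf{H}\cap G_\br$, the orbit map $g\mapsto \rho(g)[v]$ factors through $G_\br/\Gamma$ to give a $G_\br$-equivariant map $G_\br/\Gamma\lr\bp(V_\br)$. Pushing forward the finite $G_\br$-invariant measure on $G_\br/\Gamma$ under this map and normalizing yields the desired probability measure $\mu$.

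The crux of the argument is Furstenberg's lemma: when $G_\br$ is a connected semisimple real Lie group without compact factors, every $G_\br$-invariant Borel probability measure on $\bp(V_\br)$ is supported on the set of $G_\br$-fixed points in $\bp(V_\br)$. The underlying dynamical fact is that a nontrivial unipotent one-parameter subgroup of $G_\br$ acts on projective space with contracting dynamics, so any measure invariant under it concentrates on the fixed-point locus of that subgroup; the absence of compact factors then guarantees enough unipotent directions in $G_\br$ to pin $\mu$ onto the joint fixed-point set. I expect this step to be the main obstacle, as it rests on the structure theory of parabolic subgroups and Jacobson--Morozov-type arguments.

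Granted the lemma, $\mu$ is supported on $G_\br$-fixed points of $\bp(V_\br)$. Since it is also concentrated on the single orbit $G_\br\cdot[v]$, the point $[v]$ itself must be fixed by every element of $G_\br$. Thus $G_\br$ is contained in the $\mathbf{G}$-stabilizer of $[v]$, which is $\mathbf{H}$. As $G_\br$ is Zariski dense in the connected semisimple $\br$-group $\mathbf{G}$, we conclude $\mathbf{G}\subset\mathbf{H}$, contradicting $\mathbf{H}\subsetneq\mathbf{G}$. Hence $\mathbf{H}=\mathbf{G}$ and $\Gamma$ is Zariski dense in $\mathbf{G}$.
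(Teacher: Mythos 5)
Your argument is essentially the classical proof of the Borel density theorem, and it is sound; note that the paper itself does not prove this statement at all --- it is quoted as a known result with a reference to Zimmer's book, where precisely this Chevalley-plus-Furstenberg argument appears. Your chain of reductions is correct: Chevalley realizes the Zariski closure $\mathbf{H}$ of $\Gamma$ as the stabilizer of a line $[v]$, the finite invariant measure on $G_\br/\Gamma$ pushes forward to a $G_\br$-invariant probability measure concentrated on the orbit $G_\br\cdot[v]$ in $\bp(V_\br)$, and the key lemma (invariant probability measures for a linear action of a connected semisimple group without compact factors live on the fixed-point set) forces $[v]$ to be fixed, whence $G_\br\subset\mathbf{H}$ and Zariski density of $G_\br$ in the connected group $\mathbf{G}$ gives the contradiction. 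Two technical points you should make explicit to close the argument: (i) since $\mathbf{H}$ is the Zariski closure of a subset of $G_\br$ it is defined over $\br$, so Chevalley's theorem can be applied over $\br$ to produce $V$ and $v$ with real coordinates, which is needed for $[v]$ to lie in $\bp(V_\br)$ and for the orbit map to land there; (ii) the unipotent-contraction argument only shows that the measure is supported on points fixed by every unipotent one-parameter subgroup, hence by the subgroup $G^+$ they generate, not literally by all of $G_\br$; this is enough, because for $G_\br$ without compact factors $G^+$ has full Lie algebra and is therefore Zariski dense in $\mathbf{G}$, so $[v]$ being $G^+$-fixed already yields $\mathbf{G}\subset\mathbf{H}$. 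With those clarifications your proof is complete and coincides with the proof in the reference the paper cites.
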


\begin{theorem} \label{normal}{\em (Margulis' normal subgroup theorem)}
Let $\Gamma\subset G$ be an irreducible lattice where $G$ is a 
connected semi simple Lie group of rank at least $2$ and with finite centre.  
If $N$ is normal in $\Gamma$, then either $N$ is of finite 
index in $\Gamma$ or is a finite subgroup contained in the centre of $G$. \hfill $\Box$
\end{theorem}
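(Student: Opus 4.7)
The plan is to follow Margulis' original two-step strategy. Set $\bar\Gamma := \Gamma/N$; the goal is to show that either $\bar\Gamma$ is finite (whence $N$ has finite index in $\Gamma$) or $N \subseteq Z(G)$ (in which case $N$ is finite, since $Z(G)$ is assumed finite). I will establish: (a) $\bar\Gamma$ always has Kazhdan's property (T); and (b) if $N$ is not contained in $Z(G)$, then $\bar\Gamma$ is amenable. Combined with the elementary fact that a discrete group which is both amenable and has property (T) must be finite, these yield the required dichotomy.

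Step (a) is the more tractable half. Since $G$ has real rank at least $2$ and $\Gamma$ is an irreducible lattice in $G$, the group $\Gamma$ has property (T); this follows from Kazhdan's original theorem when each simple factor of $G$ has rank $\ge 2$, and from a standard projection argument exploiting irreducibility together with the classification of rank-one simple Lie groups with property (T) (namely $\Sp(n,1)$ and $F_{4(-20)}$) in the general case. Since property (T) descends to arbitrary quotients, $\bar\Gamma$ has property (T) as well.

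Step (b) is the main obstacle and carries essentially all of the deep content. Fix a minimal parabolic subgroup $P\subset G$; the Furstenberg boundary $G/P$ carries a natural $K$-invariant probability measure $\nu$, and the $\Gamma$-action on $(G/P,\nu)$ is amenable in Zimmer's sense. Margulis' measurable factor theorem asserts that every measurable $\Gamma$-equivariant quotient of $(G/P,\nu)$ is isomorphic to $G/Q$ for some parabolic $Q\supseteq P$. The partition of $G/P$ into $N$-orbits yields a $\bar\Gamma$-equivariant measurable factor of $G/P$; by the factor theorem, this factor is isomorphic to some $G/Q$. If $Q$ were a proper parabolic, the stabiliser condition would force $N\subseteq \bigcap_{\gamma\in\Gamma}\gamma Q\gamma^{-1}$, which equals $Z(G)$ by Borel density and irreducibility, contradicting $N\not\subseteq Z(G)$. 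Hence $Q=G$, meaning $N$ acts ergodically on $(G/P,\nu)$; a theorem of Zimmer then gives that $\bar\Gamma$ is amenable.

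The hard part is the factor theorem invoked in step (b): its proof requires substantial ergodic theory on boundary actions, in particular a careful analysis of $P$-equivariant measurable maps via Mautner-type mixing arguments and entropy estimates in the higher-rank setting. The rank-$\ge 2$ hypothesis enters decisively here, since in real rank one both the factor theorem and the conclusion of the theorem itself fail, as rank-one lattices can admit infinite-index non-central normal subgroups.
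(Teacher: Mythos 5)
First, note that the paper does not prove this statement at all: it is quoted as a known deep theorem, with the proof referred to Zimmer's book \cite{zimmer}. So your proposal can only be measured against the standard Margulis argument, whose overall shape you do reproduce: show $\bar\Gamma=\Gamma/N$ is amenable (via boundary theory and the factor theorem) and has property (T), and conclude finiteness. Your amenability half is a reasonable sketch, with the genuinely hard ingredient (the measurable factor theorem) explicitly deferred, which is acceptable given that the paper itself only cites the result.

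The genuine gap is in your step (a). The theorem is stated for $G$ of \emph{total} real rank at least $2$, which includes products such as $\SL(2,\br)\times\SL(2,\br)$ or $\SL(2,\br)\times\SL(3,\br)$; irreducible lattices in such groups (e.g.\ Hilbert modular groups) do \emph{not} have property (T), because a lattice has property (T) if and only if the ambient group does, and any simple factor locally isomorphic to $\SL(2,\br)$, $\so(n,1)$ or $\mathrm{SU}(n,1)$ destroys property (T) for $G$. Hence no ``projection argument exploiting irreducibility'' can prove that $\Gamma$ itself is Kazhdan in the general case; the classification you invoke (only $\Sp(n,1)$ and $F_{4(-20)}$ among rank-one groups have (T)) in fact shows your claim is false whenever a non-(T) rank-one factor is present. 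In Margulis' actual proof the (T)-half is the assertion that the \emph{quotient} $\Gamma/N$ has property (T) for every non-central normal $N$, and in the mixed-rank/product case this is a substantial theorem proved directly (via induced representations), not by descent of (T) from $\Gamma$. As written, your argument only covers the case where every simple factor of $G$ has rank at least $2$ (or, more generally, where $G$ is Kazhdan). A secondary quibble in step (b): for a proper parabolic $Q$ the core $\bigcap_{g\in G}gQg^{-1}$ equals $Z(G)$ only when $Q$ does not contain an entire simple factor of $G$; when $G$ is a nontrivial product this can fail, and ruling it out requires using irreducibility of $\Gamma$ more carefully than your one-line appeal to Borel density suggests.
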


Next we state the strong rigidity for irreducible lattices.  

\begin{theorem} \label{rigidity} (Strong rigidity)
Let $G$ and $G'$ be connected linear semi simple Lie groups with trivial centre and having no compact factors. Let $\Gamma\subset G$ and $\Gamma'\subset G'$ be irreducible lattices.  Assume that $G$ and $G'$ are not locally isomorphic to $SL(2,\br)$. Then any isomorphism $\phi:\Gamma\lr \Gamma'$ extends to an isomorphism $G\lr G'$ of Lie groups. \hfill $\Box$
\end{theorem}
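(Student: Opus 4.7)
The plan is to split into two cases by real rank, since the methods are quite different.

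For real rank of $G$ at least $2$, I would proceed via Margulis' superrigidity, which says that any homomorphism from $\Gamma$ into a non-compact simple real algebraic group with Zariski-dense image extends continuously to $G$. Applied to $\phi:\Gamma\to G'$, the Borel density theorem guarantees Zariski-density of $\phi(\Gamma)$ in $G'$, so $\phi$ extends to a continuous homomorphism $\Phi:G\to G'$. Running the same argument on $\phi^{-1}$ gives $\Psi:G'\to G$, and $\Psi\circ\Phi$ restricts to the identity on the Zariski-dense subgroup $\Gamma$, hence equals the identity on $G$, producing the desired Lie group isomorphism. The real work is in superrigidity itself: one associates to $\phi$ a Borel cocycle on $G\times G/P$ with values in $G'$ (for $P$ a minimal parabolic), then via Furstenberg boundary theory produces a $\Gamma$-equivariant measurable map $G/P\to G'/P'$. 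The higher rank hypothesis is used decisively here, since $P$ contains enough commuting unipotent subgroups that any such equivariant measurable map is forced to be algebraic, via a Zariski closure argument on the essential image. Once the boundary map is algebraic, one descends to the cocycle to obtain the continuous extension. The main obstacle is exactly this rigidity-of-invariant-measures step, which is what genuinely fails in rank $1$.

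For rank $1$ with $G$ not locally $\SL(2,\br)$, I would follow Mostow's geometric approach. The isomorphism $\phi$ induces a $\phi$-equivariant quasi-isometry $\wt\phi:G/K\to G'/K'$ between symmetric spaces, obtained by transporting any map defined on a compact fundamental domain by the $\Gamma$-action; finite generation of $\Gamma$ yields the quasi-isometry constants. Negative curvature of rank-one symmetric spaces ensures $\wt\phi$ extends to a homeomorphism $\partial\phi$ of visual boundary spheres. The crux is to show $\partial\phi$ is quasiconformal with respect to the Carnot--Carath\'eodory structure on the boundary and then to promote quasiconformality to conformality, using ergodicity of the $\Gamma$-action on the product of boundaries (via Moore's ergodicity theorem) together with a measurable bounded distortion argument. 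A conformal boundary map extends uniquely to a Lie group isomorphism $G\to G'$.

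The hardest step in the rank-one case is the quasiconformal-to-conformal upgrade. For real hyperbolic space this rests on ergodicity of the geodesic flow and classical quasiconformal analysis; for the complex, quaternionic, and octonionic hyperbolic spaces it requires Pansu-type differentiation on Carnot--Carath\'eodory manifolds, which is substantially more subtle. The exclusion of $\SL(2,\br)$ is essential: the boundary is then a $1$-sphere on which quasiconformality imposes no regularity, reflecting the nontrivial Teichm\"uller moduli of hyperbolic surfaces that genuinely obstruct rigidity in that case.
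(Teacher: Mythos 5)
The paper does not prove this statement at all: it is quoted as a classical black box (Mostow for cocompact lattices, Prasad for non-cocompact rank-one lattices, Margulis for higher rank), with the reader sent to \cite{mostow}, \cite{prasad} and \cite{zimmer}. So there is no ``paper's proof'' to match your outline against; the only question is whether your sketch is a credible roadmap to the cited results. At the level of identifying the two standard routes --- superrigidity in higher rank, boundary quasiconformality in rank one --- it is accurate, and you correctly locate the hard steps (algebraicity of the measurable boundary map; the quasiconformal-to-conformal upgrade via ergodicity and Pansu differentiation) and the role of the $\SL(2,\br)$ exclusion.

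There is, however, one concrete gap beyond the unavoidable fact that each of your two ``steps'' is a monograph-length theorem. Your rank-one argument builds the $\phi$-equivariant quasi-isometry $G/K\to G'/K'$ by ``transporting a map defined on a compact fundamental domain,'' i.e.\ by the Milnor--\v{S}varc argument. That only works when $\Gamma$ is cocompact, whereas the statement (and the paper's use of it) covers non-cocompact rank-one lattices: there $\Gamma$ is quasi-isometric to a neutered space or a tree-graded object, not to $G/K$, and the boundary extension must be produced differently --- this is precisely the content of Prasad's theorem \cite{prasad}, which handles the cusps via a compactification and a pseudo-isometry on the thick part rather than a genuine equivariant quasi-isometry of the symmetric spaces. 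A smaller point on the higher-rank side: superrigidity as you state it targets a \emph{simple} algebraic group, so for semisimple $G'$ you must project to the simple factors and use irreducibility of $\Gamma'$ (plus Borel density and non-precompactness of each projection) before extending; and the extension is a priori only a continuous homomorphism with Zariski-dense image, so you still need an argument (finiteness of kernel, equality of dimensions, adjointness of $G$ and $G'$) to conclude it is an isomorphism of Lie groups rather than merely injective on $\Gamma$. None of this is fatal --- it is all in the literature you are implicitly citing --- but as written the proposal is a survey of where the proof lives, not a proof.
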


The strong rigidity theorem for cocompact lattices was obtained by Mostow \cite{mostow}.  Margulis showed that the result holds for $G$ as above with real rank $\geq 2$.  The rank $1$ case (when the lattice is non-cocompact) is due to 
Prasad \cite{prasad}, who extended the classical work of Mostow concerning rigidity of 
rank $1$ compact locally symmetric manifolds. The proofs of the rigidity theorem for the case rank $\geq 2$, the Borel density theorem, and the Margulis' normal subgroup theorem can be found in \cite{zimmer}.

\section{Proof of Theorem \ref{main}}

Before we begin the proof, we recall some elementary 
notions in combinatorial group theory and recall some 
facts concerning the $R_\infty$-property.

Let $\Gamma$ be a group and $H$ a subgroup of $\Gamma$.  Recall that a subgroup $H$ is said to be {\it  characteristic} in $\Gamma$ if $\phi(H)=H$ for every automorphism $\phi$ of $\Gamma$.  $\Gamma$ is called {\it hopfian} (resp. {\it co-hopfian}) if every surjective (resp. injective) endomorphism of $\Gamma$ is an automorphism of $\Gamma$.   One says that $\Gamma$ is {\it residually finite}  
if, given any $g\in \Gamma$, there exists a finite index subgroup $H$ in $\Gamma$ such that $g\notin H$. It is well-known 
that any finitely generated subgroup of $\gl(n,k)$, where $k$ is any field,  
is residually finite and that finitely generated 
residually finite groups are hopfian.  We refer the reader to 
\cite{ls} for detailed discussion on these notions.

We recall here some facts concerning the $R_\infty$-property. Let 
\[1\lr N\stackrel{j}{\hookrightarrow} \Lambda\stackrel{\eta}{\lr}\Gamma\lr 1\eqno(1)\] 
be an exact sequence of groups.    
  
\begin{lemma} \label{characteristic} 
Suppose that $N$ is characteristic in $\Lambda$ and that $\Gamma$ has the $R_\infty$-property, then $\Lambda$ also has the $R_\infty$-property.
\end{lemma}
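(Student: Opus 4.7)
The plan is to show that any automorphism $\phi$ of $\Lambda$ descends through the exact sequence (1) to an automorphism $\bar\phi$ of $\Gamma$, and that the induced map on twisted conjugacy classes $\mathcal{R}(\phi)\lr \mathcal{R}(\bar\phi)$ is surjective. From this, the Reidemeister numbers satisfy $R(\bar\phi)\leq R(\phi)$, so if some $\phi\in\textrm{Aut}(\Lambda)$ had $R(\phi)<\infty$, then $\bar\phi\in\textrm{Aut}(\Gamma)$ would have $R(\bar\phi)<\infty$, contradicting the assumed $R_\infty$-property of $\Gamma$.

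First I would use characteristicity to define $\bar\phi$: since $\phi(N)=N$, the map $\phi$ descends to an endomorphism $\bar\phi:\Gamma\lr\Gamma$ satisfying $\eta\circ\phi=\bar\phi\circ\eta$. Applying the same construction to $\phi^{-1}$ produces the inverse of $\bar\phi$, so $\bar\phi\in\textrm{Aut}(\Gamma)$. Next I would verify that $\eta$ maps $\phi$-twisted conjugacy classes into $\bar\phi$-twisted conjugacy classes: if $y=gx\phi(g^{-1})$ in $\Lambda$, then applying $\eta$ yields $\eta(y)=\eta(g)\,\eta(x)\,\bar\phi(\eta(g)^{-1})$, so $[x]_\phi\mapsto [\eta(x)]_{\bar\phi}$ is well-defined. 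Because $\eta$ is surjective onto $\Gamma$, the induced map $\mathcal{R}(\phi)\lr \mathcal{R}(\bar\phi)$ is also surjective, giving $R(\bar\phi)\leq R(\phi)$.

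Finally, I would conclude by contrapositive: if $\Lambda$ failed to have the $R_\infty$-property, some automorphism $\phi$ of $\Lambda$ would satisfy $R(\phi)<\infty$, whence $R(\bar\phi)<\infty$ as well, contradicting the hypothesis that $\Gamma$ is an $R_\infty$-group. This forces $R(\phi)=\infty$ for every $\phi\in\textrm{Aut}(\Lambda)$, i.e.\ $\Lambda$ has the $R_\infty$-property.

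There is no real obstacle in this argument; the entire content is the observation that characteristicity of $N$ is exactly what is needed to lift an automorphism of $\Lambda$ to the quotient, so the surjection $\Lambda\twoheadrightarrow\Gamma$ becomes equivariant with respect to the respective twisted actions. The only point worth a moment's care is checking that $\bar\phi$ is a \emph{bijection} (not merely an endomorphism), which is where one uses that $N$ is preserved by $\phi^{-1}$ as well, a consequence of characteristicity rather than mere $\phi$-invariance.
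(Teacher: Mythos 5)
Your proposal is correct and follows the same route as the paper: descend $\phi$ to $\bar\phi\in\textrm{Aut}(\Gamma)$ via characteristicity of $N$, and use the surjection $\mathcal{R}(\phi)\twoheadrightarrow\mathcal{R}(\bar\phi)$ induced by $\eta$ to conclude $R(\phi)\geq R(\bar\phi)=\infty$. The paper's proof is simply a more terse statement of exactly these steps, leaving the surjectivity of the induced map on twisted classes implicit.
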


\begin{proof}
Let $\phi:\Lambda\lr \Lambda$ be any automorphism. Since $N$ is characteristic, $\phi(N)=N$ and so $\phi$ induces an automorphism $\bar{\phi}:\Gamma\lr \Gamma$. Since $R(\bar{\phi})=\infty$, it follows that $R(\phi)=\infty$.   
\end{proof}

The following proposition is perhaps well-known; a proof can be found in \cite{ms}.

\begin{proposition}\label{residual}
Let $\Gamma$ be a countably infinite residually finite group. Then $R(\phi)=\infty$ for any inner automorphism $\phi$ of $\Gamma$.  \hfill $\Box$
\end{proposition}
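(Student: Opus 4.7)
The plan is to reduce the statement to the classical fact that a countably infinite residually finite group has infinitely many ordinary conjugacy classes.

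First I would observe that when $\phi$ is inner, say $\phi(x) = \gamma x \gamma^{-1}$ for some fixed $\gamma \in \Gamma$, the $\phi$-twisted conjugacy classes are in bijection with the ordinary conjugacy classes of $\Gamma$. Indeed, $y = g x \phi(g^{-1}) = g x \gamma g^{-1} \gamma^{-1}$ is equivalent to $y\gamma = g(x\gamma)g^{-1}$, so right-multiplication by $\gamma$ descends to a bijection between $\mathcal{R}(\phi)$ and the set of ordinary conjugacy classes of $\Gamma$. Hence $R(\phi)$ equals the number of ordinary conjugacy classes of $\Gamma$, and it suffices to prove that this number is infinite.

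Suppose for contradiction that $\Gamma$ has only $n$ conjugacy classes. Then every quotient of $\Gamma$ has at most $n$ conjugacy classes, since the images of a fixed set of $n$ representatives cover the quotient and each image lies in a single conjugacy class there. By Landau's classical theorem, a finite group with at most $n$ conjugacy classes has order bounded by a constant $M(n)$ depending only on $n$; the elementary proof proceeds from the class equation $1 = \sum_{i=1}^n 1/[G:C_G(x_i)]$ together with the observation that $1$ admits only finitely many representations as a sum of $n$ unit fractions. On the other hand, because $\Gamma$ is infinite and residually finite, it admits finite quotients of arbitrarily large order: given any $N$, pick distinct $g_1,\dots,g_N \in \Gamma$, and for each pair $i<j$ use residual finiteness to find a finite-index normal subgroup $K_{ij}$ of $\Gamma$ not containing $g_i g_j^{-1}$ (replacing a finite-index subgroup by its normal core if necessary). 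Then $K = \bigcap_{i<j} K_{ij}$ has finite index in $\Gamma$ and the cosets $g_1K,\dots,g_NK$ are pairwise distinct, so $|\Gamma/K| \geq N$. Taking $N > M(n)$ yields the desired contradiction.

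The only non-routine ingredient is Landau's theorem; everything else is an immediate unwinding of definitions or a standard application of residual finiteness. In that sense the main conceptual content of the proof is the interplay between the hypothesis of finitely many conjugacy classes (which descends to finite quotients) and residual finiteness (which forces the existence of arbitrarily large finite quotients), with Landau's bound serving as the bridge between them.
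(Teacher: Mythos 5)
Your proof is correct. Note that the paper itself does not prove Proposition \ref{residual}; it only cites the preprint \cite{ms}, so there is no in-paper argument to match. Your first step --- the bijection $[x]_\phi \mapsto [x\gamma]$ between $\phi$-twisted classes for $\phi=\iota_\gamma$ and ordinary conjugacy classes, via $y=gx\phi(g^{-1}) \Leftrightarrow y\gamma=g(x\gamma)g^{-1}$ --- is the standard reduction and is surely also the first step of the cited proof. For the second step (an infinite residually finite group has infinitely many conjugacy classes) you route through Landau's theorem plus the existence of arbitrarily large finite quotients; this works, but there is a shorter argument that avoids Landau entirely: if $x_1=e,x_2,\dots,x_n$ represent all conjugacy classes, use residual finiteness to pick a finite-index \emph{normal} subgroup $K$ containing none of $x_2,\dots,x_n$; since $K$ is normal it is a union of conjugacy classes, so $K=\{e\}$, forcing $\Gamma$ to be finite, a contradiction. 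Your version buys a quantitative statement (finite quotients of bounded class number have bounded order) at the cost of an extra classical ingredient; either is acceptable. One small slip in your sketch of Landau: the normalized class equation reads $1=\sum_{i=1}^n 1/|C_G(x_i)|$, not $1=\sum_{i=1}^n 1/[G:C_G(x_i)]$; since the class of the identity contributes $1/|G|$, the finiteness of representations of $1$ as a sum of $n$ unit fractions bounds $|G|$. This does not affect the validity of your argument, as Landau's theorem itself is correct as invoked.
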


We are now ready to prove the main theorem.

\noindent
{\it Proof of  Theorem \ref{main}:}  
First suppose that $G$ has trivial centre and has no compact 
factors.   Since the centre of $G$ is trivial, the homomorphism $\iota:G\lr Aut(G)$ given by inner 
automorphism allows us to identify $G$ with the group of inner automorphims of $G$.  Under this identification, 
$G$ is the identity component of $Aut(G)$ and 
$Aut(G)/G\cong Out(G)$ is finite.  
Also the group $Aut(G)$ is isomorphic to the linear Lie group $Aut(\frak{g})\subset \gl(\frak{g})$ of the automorphisms of the Lie algebra $\frak{g}$ of $G$ under which $\phi\in Aut(G)$ corresponds to its derivative  at the identity element.  
Thus we have a chain of monomorphisms $\Gamma\hookrightarrow G\stackrel{\iota}{\lr} Aut(G)\cong Aut(\frak{g})\hookrightarrow \gl(\frak{g})$.   Furthermore,  $Aut(G)\cong Aut(\frak{g})$ is the $\br$-points of the complex algebraic group $\mathbf{H}:=Aut(\frak{g}\otimes_\br \bc)$ and the identity component of $H_\br$ is $Aut(G)^0=G$.  

Suppose that $\phi:\Gamma\lr \Gamma$ is an automorphism. 
Clearly $\phi\circ \iota_\gamma=\iota_{\phi(\gamma)}\circ\phi$ where $\iota_\gamma$ denotes 
conjugation by $\gamma$. 
Now let $x,y\in \Gamma$ be such that $x\sim_\phi y$. 
Then there exists a $\gamma\in \Gamma$ such that $y=\gamma x\phi(\gamma^{-1})$; equivalently,  
$\iota_y=\iota_\gamma \iota_x\iota_{\phi(\gamma)^{-1}} =\iota_\gamma \iota_x\phi\iota_{\gamma^{-1}}\phi^{-1}$.  Hence
$\iota_y\phi=\iota_\gamma(\iota_x\phi)\iota_{\gamma^{-1}}.$ 
  
By the strong rigidity theorem, 
$\phi\in Aut(\Gamma)$ extends to an automorphism of the Lie group $G$, again denoted $\phi\in Aut(G)$.
For any $h\in H_\br$, consider the function $\tau_h:\mathbf{H}\lr \bc$ defined as $\tau_h(x)=\tr(xh)$, the trace of 
$xh\in \mathbf{H}\subset \gl(\frak{g}\otimes_\br\bc)$.  
Clearly this is a morphism of varieties defined over $\br$.  We have that, if $x,y\in \Gamma, x\sim_\phi y$, then $\tau_\phi(y)=\tau_\phi(x)$ since $\iota_y\phi$ and $\iota_x\phi$ are conjugates in $\mathbf{H}$.   

Assume that the Reidemeister number of $\phi$ is finite. 
Then, by what has been observed above, $\tau_\phi$ assumes only finitely many values on $\Gamma
\subset H_\br^0=G$. 
Since, by the Borel density theorem, $\Gamma$ is Zariski dense in $\mathbf{H}^0$, it follows that $\tau_\phi$ is constant on $\mathbf{H}^0$.   This clearly implies that 
$\tau_{h\phi }$ is constant for {\it any} $h\in H^0_\br$.

Let $K$ be a maximal compact subgroup of $H_\br=Aut(G)$. Since $Aut(G)$ has only finitely many components, by a well-known result of Mostow, $K$ meets {\it every} connected component of $Aut(G)$.  (See \cite[Theorem 1.2, Ch. VII]{bor},\cite{ho}.)  Thus $K$ contains representatives of every element of $Out(\Gamma)$ and so we may choose an $h\in H^0_\br$ such that $\theta:=h\phi \in K$. 
The automorphism $Ad(\theta)$ on the Lie algebra $Lie(K^0)$ fixes a regular (semi simple) element $X\in Lie(K^0)$ by \S 3.2, Ch. VII of \cite{bor}. Hence the one-parameter subgroup $S:=\{\exp(tX)\mid t\in \br\} \subset K^0$ is contained in the centralizer $C_{H_\br}(\theta)
=\{x\in H_\br\mid \theta x=x \theta\}$.  
Note that $\theta$ is also semi simple since $K$ is compact subgroup of $\gl(\frak{g}\otimes_\br\bc)$. 
It follows that $\theta$ and $\exp(tX), t\in \br,$ are simultaneously diagonalizable (over $\bc$).  It is now 
readily seen that $\tau_\theta$ is not constant on 
$S\subset H_\br^0 $, a contradiction to our earlier 
observation that $\tau_{h\phi}$ is a constant function for any $h\in H^0_\br$. This implies that $R(\phi)=\infty$. 

Next suppose that $G$ has no compact factors but  possibly with non-trivial centre, $Z$. By our hypothesis $Z$ is finite. Clearly $Z\cap \Gamma\subset Z(\Gamma)$ the centre of $\Gamma$. 
Since $\bar{\Gamma}:=\Gamma/(Z\cap \Gamma)$ is Zariski dense in $G/Z$, and since $G/Z$ has trivial centre, we see that $\Gamma/(Z\cap \Gamma)$ has trivial centre. It follows that  $Z(\Gamma)= Z\cap \Gamma$.  
Consider the exact sequence 
\[1\to Z\cap \Gamma\to \Gamma\to \bar{\Gamma}\to 1\eqno{(2)}\]
Since $Z\cap\Gamma=Z(\Gamma)$ is a finite characteristic subgroup of $\Gamma$,  the $R_\infty$ property for $\Gamma$ follows from that for $\bar{\Gamma}$.  

Finally let $G$ be any Lie group as in the theorem. 
Let $M$ be the maximal compact normal subgroup of $G$.  Note that 
	$M$ contains the centre $Z$ of $G$.  Now $M\cap \Gamma$ is a {\it finite} normal subgroup of $\Gamma$.   
We invoke Theorem \ref{normal} to conclude that 
$M\cap \Gamma$ is contained in the centre of $G$.   Also $Z(\Gamma)$ is contained in $Z$ since, otherwise, by Theorem \ref{normal} again, $\Gamma$ would be virtually abelian. Since $G$ is a non-compact semi simple Lie group, this is impossible.  
Since $M$ contains 
$Z$, we see that $M\cap \Gamma=Z\cap \Gamma$ {\it equals} 
the centre of $\Gamma$ and hence is characteristic  
in $\Gamma$. 
Now $\bar{\Gamma}:=\Gamma/(M\cap\Gamma)$ is an irreducible lattice in $G/M$, which has trivial centre and no compact factors.  Using the exact sequence (2) again, we see that $R(\phi)=\infty$. 
This completes the proof. \hfill $\Box$

\begin{remark} {\em 
(i)  Suppose that $G$ is not locally isomorphic to $SL(2,\br)$ and that the real rank of $G$ equals $1$.  When $G$ has no compact factors, the above proof 
can be repeated verbatim to show that $\Gamma$ has the $R_\infty$ property.  When $G$ has 
compact factors and $\Gamma$ is residually finite (for example when $G$ is linear) one can find 
a finite index characteristic subgroup $\Gamma'$ of $\Gamma$ such that $\Gamma'\cap M=\{1\}$ where $M$ is as in the above proof. Now $\Gamma'\cong \Gamma'/M$ and so has the $R_\infty$ property. 
It follows from Lemma 2.2 of \cite{ms} that $\Gamma$ has the $R_\infty$ property.\\
(ii) Suppose that $G$ is a linear connected 
semi simple Lie group of real rank at least $2$ and 
let $\Gamma$ be an irreducible lattice in $G$. Since 
$\Gamma$ is finitely generated and linear, it follows that 
$\Gamma$ is residually finite and hence Hopfian.  
Let $1\to A\stackrel{j}{\hookrightarrow} \Lambda\stackrel{\eta}{\to} \Gamma\to 1$ be an exact sequence of groups where $A$ is any countable abelian group.  Proceeding as in the proof of \cite[Theorem 1.1(ii)]{ms}, one can show that $\Lambda$ has the $R_\infty$-property. We give an outline of the proof.  Let $\phi\in Aut(\Lambda)$ and let $f=\eta\circ\phi|A$.  Then $f(A)$ is normal in $\Gamma$.  By the normal subgroup theorem 
of Margulis, either $f(A)$ is of finite index---in which case $f(A)$ is a lattice in $G$---or $f(A)$ 
is contained in the centre of $G$ since $G$ has real rank at least $2$ and $\Gamma$ is irreducible.  Since $\Gamma$ is not virtually abelian, we see that $f(A)$ has to be finite. Replacing $A$ by $\wt{A}:=\eta^{-1}(Z(\Gamma))$ we see that $\wt{A}$ is a characteristic subgroup of $\Lambda$.    Using the observation that $\Gamma$ is Hopfian and proceeding as in \cite{ms}, we see that $\Lambda$ has the $R_\infty$ property.    \\
(iii) Timur Nasibullov \cite{n} has obtained the following result.   Let $\Gamma=GL(n,R)$ or $SL(n,R), n\geq 3,$ where $R$ is an infinite integral domain and let $\Phi$ be the subgroup of $Aut(\Gamma)$ generated by the inner automorphisms, homothety by a central character,  and the contragradient automorophisms. Then for any $\phi\in \Phi,$ one has $R(\phi)=\infty$.  In particular, if $R$ has no non-trivial automorphism (e.g. $R=\br$) and has characteristic zero, then 
$\Gamma$ has the $R_\infty$-property.  
}
\end{remark}

%%%%%%%%%%%%%%%%%%%%%%%%%%%%%%%%%%%%%%

\end{document}